\newtheorem{thm}{Theorem}[section]
\newtheorem{lem}[thm]{Lemma}
\theoremstyle{definition}
\theoremstyle{remark}
\newtheorem{rem}[thm]{Remark}
\begin{document}

\title[Entire solutions to equations of minimal surface type]{Entire solutions to equations of minimal surface type in six dimensions}
\author{Connor Mooney}
\address{Department of Mathematics, UC Irvine}
\email{\tt mooneycr@math.uci.edu}

\begin{abstract}
We construct nonlinear entire solutions in $\mathbb{R}^6$ to equations of minimal surface type that correspond to parametric elliptic functionals. 
\end{abstract}
\maketitle

\section{Introduction}
A well-known theorem of Bernstein says that entire minimal graphs in $\mathbb{R}^3$ are hyperplanes. Building on work of Fleming \cite{F} and De Giorgi \cite{DeG}, Simons \cite{S} extended this result to minimal graphs in $\mathbb{R}^{n+1}$ for $n \leq 7$. In contrast, there are nonlinear entire solutions to the minimal surface equation in dimension $n \geq 8$ due to Bombieri-De Giorgi-Giusti \cite{BDG} and Simon \cite{Si1}.

In this paper we study the Bernstein problem for a more general class of parametric elliptic functionals. These assign to an oriented hypersurface $\Sigma \subset \mathbb{R}^{n+1}$ the value
\begin{equation}\label{PhiEnergy}
A_{\Phi}(\Sigma) := \int_{\Sigma} \Phi(\nu),
\end{equation}
where $\nu$ is a choice of unit normal to $\Sigma$ and $\Phi \in C^{2,\,\alpha}(\mathbb{S}^n)$ is a positive even function. We say $\Phi$ is uniformly elliptic if its one-homogeneous extension to $\mathbb{R}^{n+1}$ has uniformly convex level sets. The case $\Phi = 1$ corresponds to the area functional. In the general case, the minimizers of $A_{\Phi}$ model crystal surfaces (see \cite{M} and the references therein). Below we assume $\Phi$ is uniformly elliptic unless otherwise specified.

When a critical point of $A_{\Phi}$ can be written as the graph of a function $u$ on a domain $\Omega \subset \mathbb{R}^n$, we say that $u$ is $\Phi$-minimal. It solves an elliptic equation of minimal surface type (see Section \ref{Preliminaries}). Jenkins \cite{J} proved that global $\Phi$-minimal functions are linear in dimension $n = 2$. Simon \cite{Si2} extended this result to dimension $n = 3$, using an important regularity theorem of Almgren-Schoen-Simon \cite{ASS} for minimizers of the parametric problem. He also showed that the result holds up to dimension $n = 7$ when $\Phi$ is close in an appropriate sense to the area integrand.

The purpose of this paper is to construct a nonlinear entire $\Phi$-minimal function on $\mathbb{R}^6$, for an appropriate uniformly elliptic integrand (which is necessarily far from the area integrand). Our main theorem is:

\begin{thm}\label{Main}
There exists a quadratic polynomial $u$ on $\mathbb{R}^6$ that is $\Phi$-minimal for a uniformly elliptic integrand $\Phi \in C^{2,\,1}(\mathbb{S}^6)$.
\end{thm}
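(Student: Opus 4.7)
The $\Phi$-minimal equation for a graph $\{x_7=u(x)\}$, written in terms of the one-homogeneous extension $\tilde\Phi$ of $\Phi$ to $\mathbb{R}^{7}$, takes the form
\[
\sum_{i,j=1}^6 \tilde\Phi_{y_iy_j}(-\nabla u(x),1)\,u_{ij}(x)=0.
\]
When $u$ is a quadratic polynomial, $D^2u\equiv B$ is a constant symmetric matrix and the equation reduces to the linear constraint $\mathrm{tr}\bigl(B\,D^2\tilde\Phi(-Bx,1)\bigr)=0$ for all $x\in\mathbb{R}^6$. Taking $B$ invertible, $-Bx$ sweeps out $\mathbb{R}^6$, and by one-homogeneity this translates to a single linear second-order PDE for $\tilde\Phi$ throughout the half-space $\{y_7>0\}\subset\mathbb{R}^7$. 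The task thereby reduces to exhibiting a $B$ and a corresponding $\tilde\Phi$ such that $\Phi=\tilde\Phi|_{\mathbb{S}^6}$ is positive, even, uniformly convex and of class $C^{2,1}$.

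\textbf{Choice of $u$ and symmetry reduction.} My plan is to choose a highly symmetric quadratic form with indefinite signature, a natural candidate being $u(x)=\tfrac12(|x'|^2-|x''|^2)$ on the splitting $\mathbb{R}^6=\mathbb{R}^3_{x'}\oplus\mathbb{R}^3_{x''}$, so that $B=\mathrm{diag}(I_3,-I_3)$ and the PDE becomes $(\Delta_p-\Delta_q)\tilde\Phi(p,q,1)=0$. Imposing the inherited $O(3)\times O(3)$ symmetry on $\tilde\Phi$ yields the ansatz $\tilde\Phi(p,q,t)=f(|p|,|q|,t)$; using the three-dimensional radial Laplacian formula the equation reduces to
\[
f_{rr}+\tfrac{2}{r}f_r \;=\; f_{ss}+\tfrac{2}{s}f_s,\qquad r=|p|,\; s=|q|,
\]
together with one-homogeneity $f(\lambda r,\lambda s,\lambda t)=\lambda f(r,s,t)$, evenness in $t$, and the smoothness conditions at $r=0$ and $s=0$ needed for $\tilde\Phi$ to be smooth on $\mathbb{R}^7\setminus\{0\}$. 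Explicit one-homogeneous solutions (such as $t$ and $(|p|^2+|q|^2)/t$, together with higher-degree polynomial combinations in $r^2,s^2,t^2$) serve as building blocks, and I would try to linearly combine these into an even, positive $\tilde\Phi$ that is uniformly convex on a large open cone containing the ray $\{p=q=0,\,t>0\}$.

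\textbf{Main obstacle: extension across the equator.} The Gauss map image of the graph of any quadratic polynomial is precisely the open hemisphere $\{\nu_7>0\}\subset\mathbb{S}^6$, so while evenness forces the values of $\Phi$ on the antipodal hemisphere, the PDE imposes no direct constraint on the equator $\{\nu_7=0\}$. The principal difficulty will be to verify that the profile $f$ can be arranged so that (i) the even extension of $\Phi$ across the equator is $C^{2,1}$ and (ii) the uniform convexity of $\tilde\Phi$ (in the one-homogeneous sense) is maintained up to and through $\{\nu_7=0\}$. The PDE rigidly constrains the behaviour of $f(r,s,t)$ as $t\to 0^+$ (equivalently $r^2+s^2\to\infty$ at $t=1$), and reconciling this asymptotic structure with the $C^{2,1}$-matching and with the ellipticity bound at the equator is, in my view, the heart of the argument. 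The fact that only $C^{2,1}$ rather than $C^\infty$ regularity is asserted is consistent with the expected limited regularity of such a gluing construction, where one can match derivatives up to order two but no further.
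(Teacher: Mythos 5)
Your setup is exactly the paper's: fix $u=\tfrac12(|p|^2-|q|^2)$ on $\mathbb{R}^3_p\oplus\mathbb{R}^3_q$, observe that for this quadratic $u$ the Euler--Lagrange equation becomes a single \emph{linear} second-order PDE for the integrand, impose $O(3)\times O(3)$ symmetry so that $\varphi(p,q)=\psi(|p|,|q|)$, and arrive at the same two-variable hyperbolic equation $\psi_{rr}+\tfrac{2}{r}\psi_r=\psi_{ss}+\tfrac{2}{s}\psi_s$. You also correctly flag the actual difficulty: what the PDE does not control is whether the one-homogeneous extension across the equator $\{\nu_7=0\}$ can be made $C^{2,1}$ with uniformly convex sublevel sets. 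Up to this point you are on the paper's track.

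The gap is that you stop precisely where the paper's proof begins to do work, and the ingredient you are missing is the one that makes the whole thing go. The reason the exponent $3$ (i.e.\ $p,q\in\mathbb{R}^3$, $n=6$) is special is that the radial wave equation above has a closed-form general solution: writing $w=rs\,\psi$, the equation is equivalent to $\Box w=0$, so
\[
\psi(r,s)=\frac{f(r+s)+g(r-s)}{rs}
\]
for arbitrary functions $f,g$. This representation formula replaces your open-ended search for ``polynomial building blocks'' by a two-function Cauchy problem in which the boundary behaviour at the equator can be engineered directly. Your proposed blocks ($t$, $(r^2+s^2)/t$, polynomials in $r^2,s^2,t^2$ over powers of $t$) will not produce the needed integrand: the one-homogeneous ones are odd in $t$ (so cannot give an even $\Phi$ without introducing an $|t|$ and destroying $C^2$ regularity at the equator), and rational combinations that are even either degenerate or blow up there. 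The paper instead takes $f(\sigma)=-g(\sigma)=2^{-5/2}(2+\sigma^2)^{3/2}$, which (after a $\pi/4$ rotation) gives $\psi(x,y)=\bigl((1+x^2)^{3/2}-(1+y^2)^{3/2}\bigr)/(x^2-y^2)$ and hence the explicit integrand
\[
\Phi(p,q,z)=\frac{\bigl((|p|+|q|)^2+2z^2\bigr)^{3/2}-\bigl((|p|-|q|)^2+2z^2\bigr)^{3/2}}{2^{5/2}\,|p|\,|q|}.
\]
The remaining (and nontrivial) work is to verify positivity, a clean determinant identity $\det D^2\psi=3(A+B)^{-4}\bigl(2+\tfrac{1}{AB}\bigr)>0$ with $A=(1+x^2)^{1/2}$, $B=(1+y^2)^{1/2}$ which gives local uniform convexity, and a Taylor expansion of $\Psi(x,y,z)=|z|\psi(x/z,y/z)$ in $z$ near the equator showing that the odd powers of $|z|$ vanish through order $3$, so that $\Psi$ is exactly $C^{2,1}$ (and no better) there, with positive definite tangential Hessian. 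Without a concrete candidate and these verifications, the proposal establishes the reduction but not the theorem.
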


\noindent Theorem \ref{Main} settles the Bernstein problem for equations of minimal surface type in dimension $n \geq 6$, leaving open the cases $n = 4,\,5$. It also answers the question whether or not there exists a nonlinear polynomial that solves such an equation. It remains an interesting open question whether or not there exists a nonlinear polynomial that solves the minimal surface equation.
 
Our approach to constructing entire solutions is different from the one taken by Bombieri-De Giorgi-Giusti,
which is based on constructing super- and sub- solutions. We instead fix $u$, which reduces the problem to solving a linear hyperbolic equation for $\Phi$. It turns out that in $\mathbb{R}^6$, we can choose a quadratic polynomial $u$ such that the solutions to this hyperbolic equation are given by an explicit representation formula. By prescribing the Cauchy data carefully we obtain an integrand with the desired properties.

As a consequence of Theorem \ref{Main} we show that the cone over $\mathbb{S}^2 \times \mathbb{S}^2$ in $\mathbb{R}^6$ minimizes $A_{\Phi_0}$, where $\Phi_0$ is the restriction of the integrand $\Phi$ from Theorem \ref{Main} to $\mathbb{S}^6 \cap \{x_7 = 0\}$ (see Remark \ref{Cones}). In fact, each level set of the function $u$ from Theorem \ref{Main} minimizes $A_{\Phi_0}$. (This observation is what guided us to the example). Morgan \cite{M} previously showed that the cone over $\mathbb{S}^{k} \times \mathbb{S}^k$ in $\mathbb{R}^{2k+2}$ minimizes a parametric elliptic functional for each $k \geq 1$, using the method of calibrations.

The analogue of the quadratic polynomial $u$ from Theorem \ref{Main} in dimension $n = 4$ is not $\Phi$-minimal for any uniformly elliptic integrand $\Phi$ that enjoys certain natural symmetries (see Remark \ref{No4D}). However, it is feasible that our approach could produce entire $\Phi$-minimal functions in the lowest possible dimension $n = 4$, that have sub-quadratic growth (see Remark \ref{Generalizations}).

\section*{Acknowledgements}
The author is grateful to Richard Schoen, Brian White, and Yu Yuan for inspiring discussions on topics related to this research.
The research was supported by NSF grant  DMS-1854788.

\section{Preliminaries}\label{Preliminaries}

\subsection{Legendre Transform}
Let $w$ be a smooth function on a domain $\Omega \subset \mathbb{R}^n$, and assume that $\nabla w$ is a diffeomorphism with inverse $X$. We define the Legendre transform $w^*$ on the image of $\nabla w$ by
$$w^*(p) := p \cdot X(p) - w(X(p)).$$
Differentiating two times we obtain
\begin{equation}\label{LTDeriv}
\nabla w^*(p) = X(p), \quad D^2w^*(p) = (D^2w)^{-1}(X(p)).
\end{equation}

\subsection{Euler-Lagrange Equation}
Assume that $\Phi \in C^{2,\,\alpha}(\mathbb{S}^n)$ is a positive, uniformly elliptic integrand. Here and below we will identify $\Phi$ with its one-homogeneous extension to $\mathbb{R}^{n+1}$, and uniform ellipticity
means that $\{\Phi = 1\}$ is uniformly convex. 

If $\Sigma$ is the graph of a smooth function $u$ on a domain $\Omega \subset \mathbb{R}^n$ then we can rewrite the variational integral (\ref{PhiEnergy}) as
$$A_{\Phi}(\Sigma) = \int_{\Omega} \varphi(\nabla u)\,dx,$$
where 
\begin{equation}\label{phiDef}
\varphi(p) := \Phi(-p,\,1).
\end{equation}
Thus, if $\Sigma$ is a critical point of $A_{\Phi}$ then $u$ solves the Euler-Lagrange equation
\begin{equation}\label{UsualEL}
\text{div}(\nabla \varphi(\nabla u)) = \varphi_{ij}(\nabla u)u_{ij} = 0
\end{equation}
in $\Omega$. The function $\varphi$ is locally uniformly convex (by the uniform ellipticity of $\Phi$), but the ratio of the minimum to maximum eigenvalues of $D^2\varphi$ degenerates at infinity.
Thus the equation (\ref{UsualEL}) is a quasilinear degenerate elliptic PDE for $u$, known in the literature as a variational equation of minimal surface or mean curvature type (see e.g. Chapter $16$ in \cite{GT} and
the references therein).

Our approach is to rewrite (\ref{UsualEL}) as a linear equation for $\varphi$. Assume that $\nabla u$ is a smooth diffeomorphism. Then using the relations in (\ref{LTDeriv}) we can rewrite the equation
(\ref{UsualEL}) as
\begin{equation}\label{LegEL}
(u^*)^{ij}(y)\varphi_{ij}(y) = 0
\end{equation}
for $y$ in the image of $\nabla u$. Below we will fix $u^*$, and then solve the equation (\ref{LegEL}) for $\varphi$.

\begin{rem}
In parametric form, the Euler-Lagrange equation (\ref{UsualEL}) for a critical point $\Sigma$ of $A_{\Phi}$ is
\begin{equation}\label{ParametricEL}
\text{tr}(D^2\Phi(\nu^{\Sigma}(x)) \cdot II^{\Sigma}(x)) = \Phi_{ij}(\nu^{\Sigma}(x))II^{\Sigma}_{ij}(x) = 0,
\end{equation}
where $\nu^{\Sigma}$ is the Gauss map of $\Sigma$ and $II^{\Sigma}$ is the second fundamental form of $\Sigma$. We note that (\ref{ParametricEL}) is invariant under dilations of $\Sigma$. Equation
(\ref{UsualEL}) can be viewed as the projection of the equation (\ref{ParametricEL}) onto a hyperplane.
\end{rem}

\begin{rem}
The graph $\Sigma$ of an entire solution to (\ref{UsualEL}) is not only a critical point, but a minimizer of $A_{\Phi}$. One way to see this is to observe that the translations of $\Sigma$ in the $x_{n+1}$ direction foliate either
side of $\Sigma$. Another way is to extend the unit normal $\nu$ on $\Sigma$ to $\mathbb{R}^{n+1}$ by letting it be constant in the $x_{n+1}$ direction, and then show that $\nabla\Phi(\nu)$ is a calibration. Indeed,
$\nabla \Phi(\nu)$ is divergence-free in $\mathbb{R}^{n+1}$ by the equation (\ref{ParametricEL}), and by viewing $\Phi$ as the support function of the uniformly convex hypersurface $K := \nabla \Phi(\mathbb{S}^{n})$ we see that
$$\nabla\Phi(\nu) \cdot \tilde{\nu} \leq \Phi(\tilde{\nu})$$ 
for any $\nu,\,\tilde{\nu} \in \mathbb{S}^n$, with equality if and only if $\nu = \tilde{\nu}$.
\end{rem}

\section{Proof of Theorem \ref{Main}}\label{Proof}
We denote points in $\mathbb{R}^6$ by $(p,\,q)$, with $p,\,q \in \mathbb{R}^{3}$. The polynomial $u$ from Theorem \ref{Main} is
\begin{equation}\label{uDef}
u(p,\,q) := \frac{1}{2}(|p|^2 - |q|^2).
\end{equation}
We note that $u = u^*$. Below we let $\Box$ denote the wave operator $\partial_x^2 - \partial_y^2$ on $\mathbb{R}^2$.

\begin{lem}\label{Reduction}
To prove Theorem \ref{Main} it suffices to find an analytic function $\psi(x,\,y)$ on $\mathbb{R}^2$ that is even in $x$ and $y$, solves the PDE
\begin{equation}\label{2DEL}
\Box \psi + 2\,\nabla \psi \cdot \left(\frac{1}{x},\,-\frac{1}{y}\right) = 0
\end{equation}
in the positive quadrant, and satisfies that the one-homogeneous function
$$\Psi(x,\,y,\,z) = |z|\,\psi\left(\frac{x}{z},\,\frac{y}{z}\right)$$
on $\mathbb{R}^3 \backslash \{z = 0\}$ has a continuous extension to $\mathbb{R}^3$ that is positive and locally $C^{2,\,1}$ on $\mathbb{R}^3 \backslash \{0\}$, and has uniformly convex level sets.
\end{lem}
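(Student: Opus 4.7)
The plan is to exploit the $O(3)\times O(3)$-symmetry of $u(p,q) = \tfrac{1}{2}(|p|^2 - |q|^2)$ to reduce the $6$-dimensional linear PDE (\ref{LegEL}) to the $2$-dimensional equation (\ref{2DEL}), and then reconstruct the integrand $\Phi$ as the one-homogeneous even extension of the resulting radial profile. Since $\nabla u(p,q)=(p,-q)$ is an involution, we have $u^*=u$ and $D^2 u^* = \mathrm{diag}(I_3,-I_3)$, so (\ref{LegEL}) becomes the linear wave equation $\Delta_p \varphi - \Delta_q\varphi = 0$ on $\mathbb{R}^6$. Substituting the radial ansatz $\varphi(p,q) = \psi(|p|,|q|)$ and applying the radial Laplacian formula on $\mathbb{R}^3$, which contributes a $\tfrac{2}{r}$ in front of the first-order term, converts this PDE into exactly (\ref{2DEL}) on the positive quadrant.

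Given a $\psi$ with the properties in the lemma, the analyticity and evenness let me write $\psi(x,y) = \tilde\psi(x^2,y^2)$ for an analytic $\tilde\psi$, so $\varphi(p,q) = \tilde\psi(|p|^2,|q|^2)$ extends analytically to all of $\mathbb{R}^6$, and the wave equation $\Delta_p \varphi = \Delta_q\varphi$ holds everywhere. The one-homogeneous even extension of $\varphi$ to $\mathbb{R}^7$ is $\Phi(a,b,c) := |c|\,\varphi(-a/c,-b/c)$ for $c\ne 0$; the evenness of $\psi$ in $x,y$ then yields $\Phi(a,b,c) = \Psi(|a|,|b|,c)$, with $\Psi$ as in the statement. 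The continuous extension of $\Psi$ across $\{z=0\}$ provides a continuous extension of $\Phi$ across $\{c=0\}$, and the evenness of $\psi$ in $x$ and $y$ makes $\Phi$ locally $C^{2,1}$ across the coordinate subspaces $\{a=0\}$ and $\{b=0\}$, by the standard fact that an even $C^{2,1}$ function of a scalar, precomposed with the norm on $\mathbb{R}^3$, remains $C^{2,1}$. Positivity, one-homogeneity, and evenness of $\Phi$ on $\mathbb{R}^7$ are then immediate.

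Uniform convexity of the level set $\{\Phi=1\}\subset\mathbb{R}^7$ follows from that of $\{\Psi=1\}\subset\mathbb{R}^3$: the former is generated from the latter by independent $O(3)$-revolutions in the $a$- and $b$-slots, and a direct Hessian computation, using bounded factors of type $a_i/|a|$ away from the axes and the evenness to cross them, shows uniform convexity is preserved. With $\Phi\in C^{2,1}(\mathbb{S}^6)$ in hand, the Legendre-transform equivalence from Section~\ref{Preliminaries} between (\ref{LegEL}) and (\ref{UsualEL}) shows that $u$ solves the Euler-Lagrange equation for $A_\Phi$, proving Theorem~\ref{Main}. I expect the substantive difficulty to lie not in this reduction but in the explicit construction of such a $\psi$, which, as hinted in the introduction, will presumably be carried out via a representation formula for (\ref{2DEL}) together with a careful choice of Cauchy data.
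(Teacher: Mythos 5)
Your proposal is correct and follows essentially the same route as the paper: set $\Phi(p,q,z) := \Psi(|p|,|q|,z)$, note $\varphi(p,q)=\psi(|p|,|q|)$, and use $D^2u^*=\mathrm{diag}(I_3,-I_3)$ together with the $3$-dimensional radial Laplacian to reduce \eqref{LegEL} to \eqref{2DEL}. The paper states the regularity and convexity transfer more tersely, while you spell out the radial reduction, the role of evenness across $\{|p|=0\}$ and $\{|q|=0\}$, and the surface-of-revolution convexity argument, but the underlying mechanism is identical.
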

\begin{proof}
Suppose we have found such a function $\psi$, and denote points in $\mathbb{R}^7$ by $(p,\,q,\,z)$ with $p,\,q \in \mathbb{R}^3$ and $z \in \mathbb{R}$.  Then the function
$$\Phi(p,\,q,\,z) := \Psi(|p|,\,|q|,\,z)$$
satisfies the desired regularity and convexity conditions. Furthermore, if we define $\varphi$ by the relation (\ref{phiDef}), that is,
$$\varphi(p,\,q) := \Phi(-p,\,-q,\,1) = \psi(|p|,\,|q|),$$
then by the definition (\ref{uDef}) of $u$ and the equation (\ref{2DEL}) for $\psi$ we have
$$(u^*)^{ij}\varphi_{ij} = 0$$
on $\mathbb{R}^6$. Hence equation (\ref{LegEL}) holds and the function $u$ is $\Phi$-minimal.
\end{proof}

\begin{proof}[{\bf Proof of Theorem \ref{Main}}]
We note that a function $\psi$ solves (\ref{2DEL}) in the positive quadrant if and only if 
$$\Box(x\,y\,\psi) = 0.$$ 
The general solution to (\ref{2DEL}) is thus given by the formula
$$\psi(x,\,y) = \frac{f(x+y) + g(x-y)}{xy}.$$
We will show that the choice
$$f(s) = - g(s) = 2^{-\frac{5}{2}}(2+s^2)^{3/2}$$
gives a function $\psi$ satisfying the remaining conditions of Lemma \ref{Reduction}. 

After rotating the plane by $\frac{\pi}{4}$ (and for ease of notation continuing to denote the coordinates by $x$ and $y$) we have for the above choices of $f$ and $g$ that
\begin{align*}
\psi(x,\,y) &= \frac{(1+x^2)^{3/2} - (1+y^2)^{3/2}}{x^2-y^2} \\
&= \frac{A^2+AB+B^2}{A+B},
\end{align*}
where
$$A := (1+x^2)^{1/2}, \quad B := (1+y^2)^{1/2}.$$
Hence $\psi$ is positive, analytic, and invariant under reflection over the axes and the diagonals. Furthermore, $\psi$ is locally uniformly convex. Indeed, after some calculation (which we omit) we arrive at
$$\det D^2\psi = 3\,(A+B)^{-4}\left(2 + \frac{1}{AB}\right) > 0,$$
and since 
$$D^2\psi(0,\,0) = \frac{3}{4}I$$ 
we conclude that $D^2\psi$ is everywhere positive definite.

Now let
\begin{align*}
\Psi(x,\,y,\,z) &:= |z|\,\psi\left(\frac{x}{z},\,\frac{y}{z}\right) \\
&= \frac{(x^2 + z^2)^{3/2} - (y^2 + z^2)^{3/2}}{x^2-y^2} \\
&=\frac{D^2 + DE + E^2}{D+E},
\end{align*}
where 
$$D := (x^2+z^2)^{1/2}, \quad E := (y^2 + z^2)^{1/2}.$$
By the local uniform convexity and analyticity of $\psi$ and the one-homogeneity of $\Psi$, we just need to check that $\Psi \in C^{2,\,1}$ in a neighborhood of the circle $\mathbb{S}^2 \cap \{z = 0\}$,
and that on this circle the Hessian of $\Psi$ restricted to any plane tangent to $\mathbb{S}^2$ is positive definite.

Restricting $\Psi$ to the plane $\{x = 1\}$ we get a function of $y$ and $z$ that is $C^{2,\,1}$ in a neighborhood of the origin (and analytic away from the origin), and at $(1,\,0,\,0)$ we have 
$$\Psi_{yy} = 2,\, \Psi_{yz} = 0,\, \Psi_{zz} = 3.$$ 
By the symmetries of $\Psi$ this gives the result in a neighborhood of the points $(\pm 1,\,0,\,0)$ and $(0,\,\pm 1,\,0)$.
We may thus restrict our attention to the region 
$$\Omega_{\delta} := \{|x|,\,|y| \geq \delta\}$$ 
for $\delta > 0$ sufficiently small. In the region $\Omega_{\delta} \cap \left\{|z| < \frac{\delta}{2}\right\}$ the function $\Psi$ is analytic, and has the expansion
\begin{equation}\label{Expansion}
\begin{split}
\Psi(x,\,y,\,z) &= \frac{x^2+|xy|+y^2}{|x|+|y|} + \frac{3}{2}\frac{z^2}{|x|+|y|} \\
&- \frac{1}{|x|+|y|}\, \sum_{k \geq 2} a_k\,\left(\sum_{i=0}^{2k-4} \frac{1}{|x|^{i+1}\,|y|^{2k-3-i}}\right) z^{2k},
\end{split}
\end{equation}
where $a_k$ are the coefficients in the Taylor series of $(1+s)^{3/2}$ around $s = 0$. Thus, for any unit vector $e \in \{z = 0\}$ we have on $\Omega_{\delta} \cap \{z = 0\}$ that 
$$\Psi_{ez} = 0,\, \Psi_{zz} = \frac{3}{|x| + |y|}.$$ 
It only remains to to check that the first term in (\ref{Expansion}) is locally uniformly convex on lines in $\{z = 0\}$ that don't pass through the origin. By its one-homogeneity and symmetry in $x$ and $y$, it suffices to check this on the line $\{x = 1\}$. Since
$$\Psi(1,\,y,\,0) = |y| + \frac{1}{1+|y|}$$
is locally uniformly convex, we are done.
\end{proof}

\begin{rem}\label{IntegrandFormula}
The integrand from Theorem \ref{Main} is given explicitly by the formula
\begin{equation}\label{PhiFormula}
\Phi(p,\,q,\,z) = \frac{\left((|p|+|q|)^2+2z^2\right)^{3/2}-\left((|p|-|q|)^2+2z^2\right)^{3/2}}{2^{5/2}|p||q|},
\end{equation}
where $p,\,q \in \mathbb{R}^3$ and $z \in \mathbb{R}$.
\end{rem}

\begin{rem}\label{Cones}
Theorem \ref{Main} implies that the cone $C$ over $\mathbb{S}^2 \times \mathbb{S}^2$ in $\mathbb{R}^6$ is a minimizer of $A_{\Phi_0}$, where 
$$\Phi_0(p,\,q) = \frac{\bigl\lvert |p| + |q| \bigr\rvert ^3 - \bigl\lvert |p| - |q| \bigr\rvert ^3}{2^{5/2}|p||q|}$$
is the restriction of $\Phi$ (defined by (\ref{PhiFormula})) to the hyperplane $\{z = 0\}$. Indeed, the hypersurfaces $\{u = \pm 1\}$ are critical points of $A_{\Phi_0}$, and their dilations foliate either side of $C$.
To see e.g. that $\{u = 1\}$ is a critical point of $A_{\Phi_0}$, first note that $R\,u$ is $\Phi$-minimal for all $R > 0$ by the homogeneity of $u$ and the invariance of the equation (\ref{UsualEL}) under the rescalings
$u \rightarrow R^{-1}u(Rx)$. Then write the equation (\ref{ParametricEL}) for the graph of $R\,u$ over points in $\{u = 1\}$, and pass to the limit as $R \rightarrow \infty$.
\end{rem}

\begin{rem}\label{No4D}
The analogue of the quadratic polynomial (\ref{uDef}) in $\mathbb{R}^4$, where $p,\,q \in \mathbb{R}^2$, is not $\Phi$-minimal for any uniformly elliptic integrand $\Phi$ on $\mathbb{S}^4$ that depends only on $|p|,\, |q|$ and $z$. To see this, we first observe that by the reasoning in Remark \ref{Cones} it suffices to show that $\{u = 1\}$ is not a critical point of $A_{\Phi_0}$ for any uniformly elliptic integrand $\Phi_0$ on $\mathbb{S}^3$ that depends only on $|p|$ and $|q|$. When we fix $\Sigma := \{u = 1\}$ and impose that $\Phi_0$ depends only on $|p|$ and $|q|$, the equation (\ref{ParametricEL}) reduces to an ODE. By analyzing this ODE one can show that one eigenvalue of $D^2\Phi_0$ will tend to infinity on the Clifford torus $\mathbb{S}^1 \times \mathbb{S}^1$.
\end{rem}

\begin{rem}\label{Generalizations}
If we take $u^*(p,\,q) = \frac{1}{m}(|p|^m - |q|^m)$ and $\varphi(p,\,q) = \psi(|p|,\,|q|)$ with $p,\,q \in \mathbb{R}^{k+1}$, then equation (\ref{LegEL}) is equivalent to the hyperbolic PDE
$$\frac{1}{m-1}x^{2-m}\psi_{xx} + k\,x^{1-m}\psi_x = \frac{1}{m-1}y^{2-m}\psi_{yy} + k\,y^{1-m}\psi_y$$
for $\psi$ in the positive quadrant. The Cauchy problem for this equation can be solved in terms of certain hypergeometric functions (see \cite{C} and the references therein). In special cases the representation formula is particularly simple, e.g. when $k=m=2$ (treated above), or when $k=1$ and $m=4$, in which case the general solution is
$$\psi(x,\,y) = \frac{f(x^2+y^2) + g(x^2-y^2)}{x^2y^2}.$$
The corresponding integrand $\Phi$ (constructed as in the proof of Lemma \ref{Reduction}) is not uniformly elliptic for any choice of $f$ and $g$, because the maximum and minimum principal curvatures of the graph of
$$u = \frac{3}{4}\left(|p|^{\frac{4}{3}} - |q|^{\frac{4}{3}}\right)$$
are not of comparable size near $\{|p|\,|q| = 0\}$. However, it is feasible that for a judicious choice of $f$ and $g$, one could make a small perturbation of the corresponding integrand and then use the method of super- and sub-solutions to construct an entire solution to a variational equation of minimal surface type in $\mathbb{R}^4$ that grows at the same rate as $u$.
\end{rem}




\end{document}